\documentclass[12pt]{article}

\oddsidemargin 0.05in \textwidth 6.25in \topmargin -0.5in
\textheight 8.5in

\usepackage{amssymb}
\usepackage{latexsym, amsmath, amscd,amsthm}
\usepackage{amsmath}
\usepackage{amssymb}
\usepackage{multirow}
\usepackage{amsthm}
\def\ord{\mathop{\operatorfont ord}\nolimits}

\newtheorem{theorem}{Theorem}[section]
\newtheorem{corollary}[theorem]{Corollary}
\newtheorem{proposition}[theorem]{Proposition}
\newtheorem{lemma}[theorem]{Lemma}

\theoremstyle{definition}

\newtheorem{example}[theorem]{Example}
\newtheorem{remark}[theorem]{Remark}

\newcommand{\Cn}{{\cal C}_n}
\newcommand{\Xn}{{\cal X}_n}

\begin{document}

\title{Two-point coordinate rings for GK-curves}

\author{Iwan M. Duursma} 

\date{April 27 (revised August 16) 2010}

\maketitle

\begin{abstract}
Giulietti and Korchm{\'a}ros presented new curves with the maximal number of points over
a field of size $q^{6}$. Garcia, G{\"u}neri, and Stichtenoth extended the construction to 
curves that are maximal over fields of size $q^{2n}$, for odd $n \geq 3$. The generalized GK-curves
have affine equations $x^q+x = y^{q+1}$ and $y^{q^2}-y = z^r$, for $r=(q^n+1)/(q+1).$
We give a new proof for the maximality of the
generalized GK-curves and we outline methods to efficiently obtain their two-point
coordinate ring.
\end{abstract} 

\section*{Introduction}

One of the main open problems for curves over finite fields is the classification of maximal curves,
curves that have the maximum number of points in the Hasse-Weil upper bound. Additional motivation for
the problem comes from coding theory since curves with many points can be used to construct long codes 
with good parameters. For many years, all new examples of maximal curves could be derived as
subcovers of the ubiquitous Hermitian curve. Giulietti and Korchm{\'a}ros \cite{GiuKor09} presented an important
new family of maximal curves (GK-curves) that can not be obtained in this way.
The Natural Embedding Theorem says that a curve is maximal over a field 
of size $q^2$ if and only if it is a curve of degree $q+1$ on a Hermitian hypersurface \cite{KorTor01}, \cite{HirKorTor08}. 
GK-curves have a known embedding as a curve on a Hermitian surface \cite{GiuKor09}. \\
Building on an example 
of Serre, Abd{\'o}n, Bezerra and Quoos \cite{AbdBezQuo09} formulated a new family of plane maximal curves. Garcia, 
G{\"u}neri and Stichtenoth \cite{GarGunSti07} construct generalized GK-curves as suitable covers of those plane
curves. For a generalized GK-curve it is not known if it is covered by the Hermitian curve. Nor is it known
how the curve is embedded in a Hermitian hypersurface. \\
In this paper, we provide an elementary proof that generalized GK-curves are maximal. It is well 
known that maximality can be shown by giving, for an arbitrary point over the algebraic closure, 
a carefully chosen hypersurface that intersects the curve only in that point and its conjugates, with 
prescribed multiplicities. For the Hermitian curve, the choice is straightforward. The curve is a
plane curve and for the hypersurface one can choose the tangent at a point. A similar straightforward
choice is available for GK-curves, using their known embedding in a Hermitian surface, but not for
generalized GK-curves. Generalized GK-curves
are defined by two equations in $3-$space and our maximality proof consists of explicitly presenting 
the surface that intersects the curve with the required multiplicities. A different maximality proof 
appeared in \cite{GarGunSti07}. It would be interesting to have yet another proof, along the lines of Weil's classical
paper on curves over finite fields and exponential sums, by expressing the Frobenius eigenvalues as 
exponential sums and then connecting the exponential sums to Gauss sums. \\
The surface that we present in our maximality proof plays a role in the second part of the paper, where
we describe the ring of 
functions on generalized GK-curves that are regular outside two given points. That ring contains a
subring $k[h,h^{-1}]$ of finite index, where $h=0$ is the equation of the surface. We describe a method to obtain 
a basis for the full ring as a free module over the subring $k[h,h^{-1}].$ A possible application, 
not considered in this paper, is the construction of good codes on generalized GK-curves. From the given description
it is straightforward to efficiently construct such codes. \\
To illustrate our methods we present the two-point non-gaps for GK-curves defined over a field of size $q^6$, for
$q=2,3,4.$ This extends earlier results for one-point non-gaps for the cases $q=2,3$ that were obtained in \cite{FanGiu10}. 

\subsection*{Maximal curves}
 
The Hermitian curve $y^q+y = x^{q+1}$ over the field $k$ of $q^2$ elements is special in several
ways. It has $N = q^3+1$ rational points ($q^3$ solutions $(x,y) \in k^2$ and one point at infinity)
and genus $g=q(q-1)/2.$  With these parameters it attains the maximum in the Hasse-Weil bound 
$N \leq q^2+1+2gq$. Moreover, as was shown by Ihara \cite{Iha81}, its genus is maximal among all curves that meet 
the Hasse-Weil upper bound (the so-called maximal curves). Ihara's result has the following
generalization. A curve over the field of $q$ elements with number of points $N > r^m+1$, for $r = \sqrt{q},$ $m \geq 2$, 
has at least one Frobenius eigenvalue $\alpha = r e^{i\theta}$ with $\theta \in (\pi/m,3\pi/m)$ \cite{DuuEnj02}.
A curve is maximal if and only if $\alpha = -r$ and $\theta = \pi$ for all Frobenius eigenvalues.
And in particular, a maximal curve can not have more than $r^3+1$ rational points. The maximum
$N=r^3+1$ is attained only if the curve is Hermitian \cite{RucSti94}. The classification of maximal curves remains a 
major challenge. An important tool in the classification is the Natural Embedding Theorem \cite{KorTor01}, \cite[Theorem 10.22, Remark 10.24]{HirKorTor08}
which says that a curve over $k$ is maximal if and only if it is a curve of degree $q+1$
on a Hermitian hypersurface. Giulietti and Korchm{\'a}ros \cite{GiuKor09} presented a curve that lies on a Hermitian
surface and that can not be obtained as a subcover of the Hermitian curve. It is defined by the
equations (we use the equivalent equations introduced in \cite{GarGunSti07}) 
$x^q+x = y^{q+1}$ and $y^{q^2}-y = z^{q^2-q+1}$ and is maximal over the field of size $q^6$.
Garcia, G{\"u}neri and Stichtenoth \cite{GarGunSti07} extended this to a larger class by proving that more 
generally the curve is maximal over $q^{2n}$ 
for odd $n$ if the exponent for $z$ is replaced with $(q^n+1)/(q+1).$ The proof in \cite{GarGunSti07} uses results
by Abd{\'o}n, Bezerra and Quoos \cite{AbdBezQuo09} for plane curves $y^{q^2}-y=z^{(q^n+1)/(q+1)}$. The genera are
\[
g ~=~ \frac{(q-1)(q^n-q)}{2} \quad \text{and} \quad  g ~=~ \frac{(q-1)(q^{n+1}+q^n-q^2)}{2},
\]
respectively, for the plane curve and for the generalized GK-curve \cite[Proposition 2.2]{GarGunSti07}. To prove maximality it is then verified 
that the number of points $N$ meets the Hasse-Weil bound. In this paper we give a
different proof that shows that for a generalized GK-curve all Frobenius eigenvalues are equal to
$-q^n.$ Whereas the maximality proof in \cite{GarGunSti07} uses the results in \cite{AbdBezQuo09}, our proof
implies those results. Our proof is elementary and consists of two steps. We look
for a special function $h$ that intersects the curve with high multiplicity in a given point. 
The two steps are (1) find suitable equations for $h$ and (2) find the function $h$ as a solution to the equations. 
The main difficulty is to stay away from computing large resultants and to introduce suitable short cuts
in the computations. The structure of the curves is nice enough that this is indeed possible
and the computations remain perfectly manageable. 

\subsection*{Rings of regular functions}

A second goal of the paper is to describe the functions on the generalized GK-curve in a way that makes it feasible to 
use the curve for the construction of good linear codes. The first results in this direction were obtained
by Fanali and Giulietti \cite{FanGiu10}, with a description of the functions regular outside a rational point
on the GK-curve, for the cases $q=2$ and $q=3$. The functions that we are looking for generalize
similar functions for the Hermitian curve. For the Hermitian curve with equation $x^q+x=y^{q+1}$
(note that we follow the notation for the GK-curve and not the common notation $y^q+y=x^{q+1}$ for the
Hermitian curve), one-point Hermitian codes use as ring of functions
\[
k[x,y] ~=~ \text{ the free $k[x]$ module with basis $\{ 1, y, \ldots, y^q  \}$. }
\]
The functions in the ring have no poles outside the point of infinity. 
Two-point codes use a larger ring of functions with the possibility of poles at a second point. For the choice
of $(0,0)$ as the second point, the ring extends to 
\[
k[x,x^{-1},y] =  \text{ the free $k[x,x^{-1}]$ module with basis $\{ 1, y, \ldots, y^q  \}$. }
\]
More generally, for a choice $(\alpha,\beta)$ as second point, and for $(a,b) = (\alpha^q,\beta^q)$
we obtain the ring
\begin{multline*} 
k[x+a-by,(x+a-by)^{-1},y-b^q] \\ = \text{ the free $k[(x+a-by),(x+a-by)^{-1}]$ module } \\
\text{ with basis $\{ 1, y-b^q, \ldots, (y-b^q)^q = y^q-b \}$.}
\end{multline*}
The surface $h=0$ that we will determine in the next two sections generalizes the line $x+a=by$. 
As outlined above, the function $h$ can be used to prove the maximality of the generalized GK-curves.
For the Hermitian curve, the automorphism group acts transitively on ordered pairs of rational points and
we can always assume that the first point is the point at infinity and the second point is the origin 
$(0,0)$. For the generalized GK-curves, this is not the case. For the GK-curve itself, the rational points 
divide into two orbits \cite{GiuKor09}. We claim that the ring of functions regular outside the point at infinity and the
origin $(0,0,0)$ is the ring 
\[
k[x,x^{-1},y,z] ~=~ \text{ the free $k[x,x^{-1}]$ module with basis $\{ y^i z^j : 0 \leq i < q+1, 0 \leq j < r  \}$. }
\]
It is known (\cite[Proposition 1]{GiuKor09}, see also \cite[Section IV]{FanGiu10}) that the ring of functions regular 
outside the point at infinity is generated by the three functions $x, y$ and $z$. To find the full ring it suffices
to invert one function that has poles only at infinity and that vanishes only in $(0,0,0)$. The function $x$ qualifies.  
In Section \ref{S:cr}, we give a description when the second point is not in the orbit of the origin $(0,0,0)$.

\section{The surface $x+a=by+Z$ in implicit form} \label{S:implicit}

The generalized GK-curve $\Cn$ is defined, for an odd integer $n$, and for $r = (q^n+1)/(q+1)$, 
by the pair of equations
\[
\begin{cases} x^q+x = y^{q+1} \\ y^{q^2}-y = z^r \end{cases}
\]
The curves were formulated in \cite{GarGunSti07} and it was shown there
that the curve $\Cn$ has genus 
\[
g(\Cn) =  \frac{(q-1)(q^{n+1}+q^n-q^2)}{2} \\
\]
and that it has the maximum number $N = q^{2n} + 1 + 2g q^n$ of ${\mathbb F}_{q^{2n}}$-rational points. 
A curve is maximal over ${\mathbb F}_{q^{2n}}$ if and only if, for an arbitrary point $P$ on the curve and for a rational point $P_0$,
the following {\emph {fundamental linear equivalence}} holds,
\[
q^n P + \Phi (P) \sim (q^n+1) P_0.
\]
Here $\Phi$ denotes the $q^{2n}$-Frobenius morphism that raises the coordinates of a point to the power $q^{2n}.$
The equivalence implies that the $q^{2n}$-Frobenius action on the Jacobian of $\Cn$ (more precisely on a $\ell$-adic Tate module
for the Jacobian) has unique eigenvalue $-q^n$, and therefore that the curve has the maximum number of $q^2+1+2gq^n$ 
rational points over ${\mathbb F}_{q^{2n}}$. We will present, for any point $P=(\alpha,\beta,\gamma)$ on $\Cn$, possibly with coordinates 
in an extension field, a polynomial $h_P \in k[x,y,z]$ with divisor 
\[
(h_P) = q^n(\alpha,\beta,\gamma)+(\alpha^{q^{2n}},\beta^{q^{2n}},\gamma^{q^{2n}})
      -(q^n+1) \infty,
\]
where $\infty$ denotes the common pole of $x, y$ and $z$. This proves the fundamental linear equivalence and hence the
maximality of $\Cn$. The function $h$ uses coefficients
$a = \alpha^{q^n},$ $b = \beta^{q^n},$ and $c = \gamma^{q^n}.$ Clearly,
$(a,b,c)$ is a point on $\Cn$.
\[
\begin{cases} a^q+a = b^{q+1} \\ b^{q^2}-b = c^r \end{cases}
\]
Ignoring for the moment the second equation and the variable $z$, we are
left with the Hermitian curve $x^q+x=y^{q+1}$ and a point $(a,b)$ on the Hermitian curve, 
i.e. $a^q+a = b^{q+1}.$ \\

For $a = \alpha^q$ and $b = \beta^q$, the
line $x + a = by$ intersects the curve in $(\alpha, \beta)$ ($q$ times) and
$(\alpha^{q^2}, \beta^{q^2})$ (multiplicity one). This is the classical 
proof that the Hermitian curve of degree $q+1$ is maximal over the field
of $q^2$ elements. For the computation of the intersection divisor of
the line $x+a = by$, we start with the three equations
\[
\begin{cases} x^q+x = y^{q+1} \\ a^q+a = b^{q+1} \\ x+a = by  \end{cases}
\]
After elimination of $x$ and $a$ we find
\[
y^{q+1}+b^{q+1}-(by)^q-by = (y^q-b)(y-b^q) = 0.
\]
For $b = \beta^q$, the solutions are $y=\beta$ ($q$ times) and
$y=\beta^{q^2}$ (multiplicity one). \\

We want to extend the argument, which
is well known for the Hermitian curve, to the curve $\Cn$. For the curve 
$\Cn$, we bring in the second equation and consider the system of equations
\[
\begin{cases} x^q+x = y^{q+1} \\ a^q+a = b^{q+1} \\
              y^{q^2}-y = z^r \\ b^{q^2}-b = c^r \\
              x+a = by+Z \end{cases}
\]
The purpose is to replace $Z$ with a suitable polynomial such that the 
surface $x+a = by+Z$ vanishes in $(\alpha, \beta, \gamma)$ ($q^n$ times) 
and $(\alpha^{q^{2n}}, \beta^{q^{2n}},\gamma^{q^{2n}})$ (multiplicity one),
for $a = \alpha^{q^n}, b = \beta^{q^n},$ and $c = \gamma^{q^n}.$ To this end
we eliminate $x$ and $a$ as well as $y$ and $b$ from the equations, which
will lead to an expression
\[
F(Z,z,c) = 0,
\]
for a polynomial $F$ that is symmetric in $z$ and $c$. 
The next step will
be to choose $Z=Z(z,c)$ such that 
\[
F(Z(z,c),z,c) = (z^{q^n}-c)(z-c^{q^n}).
\]
For such a choice of $Z$, the hypersurface $x+a = by+Z$ intersects the
curve $\Cn$ in points $(x,y,z)$ with $z$ a root of 
\[
(z^{q^n}-c)(z-c^{q^n}) = (z^{q^n}-\gamma^{q^n})(z-\gamma^{q^{2n}}) =
(z-\gamma)^{q^n}(z-\gamma^{q^{2n}})=0.
\]
After elimination of $x$ and $a$ we find
\[
\begin{cases} (y^q-b)(y-b^q) = Z^q+Z \\
              y^{q^2}-y = z^r \\ b^{q^2}-b = c^r 
              \end{cases}
\]
We express the left hand sides in terms of the new
variables $u = y^q-b$ and $v = y-b^q$. And we replace the last 
two equations with two new equations.
\[
\begin{cases} uv = Z^q+Z \\
              (u^q-v)(u-v^q) = (cz)^r \\ 
              (u^{q^2}-u)(v^{q^2}-v) = (z^{qr}-c^r)(z^r-c^{qr})
              \end{cases}
\]
Elimination of $u$ and $v$ is straightforward. Note that all left sides are 
symmetric polynomials in $u$ and $v$. The symmetric polynomial $(X^{q-1}-1)(Y^{q-1}-1)$ 
can be written as a polynomial $F(X+Y,XY)$ in $X+Y$ and $XY$.
\begin{align*}
(X^{q-1}-1)(Y^{q-1}-1) &= \prod_{\zeta^{q-1}=1} (X-\zeta)(Y-\zeta) \\                         
                     &= \prod_{\zeta^{q-1}=1} (XY-\zeta(X+Y)+\zeta^2) =: F(X+Y,XY). 
\end{align*}
Thus, for $X=u^{q+1}, Y = v^{q+1},$ 
\begin{align*}
(z^{qr}-c^r)(z^r-c^{qr}) &= (u^{q^2}-u)(v^{q^2}-v) \\
                       &= (uv)(X^{q-1}-1)(Y^{q-1}-1) \\
                       &= (uv)F(X+Y,XY) \\
                       &= (uv) F((cz)^r+uv+(uv)^q,(uv)^{q+1}).
\end{align*}
  
With $(q+1)r = q^n+1$, we have
\begin{align*}
(z^{qr}-c^r)(z^r-c^{qr}) &= (z^{q^n}-c)(z-c^{q^n}) + cz + (cz)^{q^n} - (cz)^r - (cz)^{rq} \\
 &= (z^{q^n}-c)(z-c^{q^n}) + ((cz)^r-cz)((cz)^{qr-1}-1).
\end{align*}
Thus we are looking for $Z$ such that, for $uv=Z^q+Z$,
\[
(cz)(uv) F((cz)^r+uv+(uv)^q,(uv)^{q+1}) = ((cz)^r-cz)((cz)^{qr}-cz).
\]
For $t = cz,$ and for $w=uv=Z^q+Z$,
\[ 
tw F(t^r+w+w^q,w^{q+1}) = (t^r-t)(t^{qr}-t),
\]
where $F(X+Y,XY) = (X^{q-1}-1)(Y^{q-1}-1).$
  
\begin{theorem} \label{T:eqnsurface}
For odd $n \geq 1$, let $\Cn$ be the generalized GK-curve over ${\mathbb F}_{q^{2n}}$, defined by the equations
\[
\begin{cases} x^q+x = y^{q+1} \\
              y^{q^2}-y = z^{r} 
\end{cases}
\]
where $r=(q^n+1)/(q+1)$. Let
\[
tw F(t^r+w+w^q,w^{q+1}) = (t^r-t)(t^{qr}-t)
\]
for $w = Z^q+Z, t = cz$ and for $F(X+Y,XY) = (X^{q-1}-1)(Y^{q-1}-1).$ 
Then, for any point $(\alpha,\beta,\gamma) \in C_n$, and for $(a,b,c)=( \alpha^{q^n}, \beta^{q^n}, \gamma^{q^n})$, the surface $x+a = by + Z$ intersects the curve $C_n$ in
\[
(\alpha, \beta, \gamma) ~(\text{$q^n$ times}) 
\quad \text{ and } \quad (\alpha^{q^{2n}}, \beta^{q^{2n}},\gamma^{q^{2n}}) ~(\text{multiplicity one}).
\]
\end{theorem}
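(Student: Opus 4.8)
The plan is to compute the principal divisor of $h=x+a-by-Z$ on $\Cn$ outright and to read the intersection off it, in three steps: a pole count at $\infty$, a determination of the zero locus from the elimination already set up, and a local multiplicity count at the two candidate zeros. For the first step I would pin down the pole orders at the unique point $\infty$. Since $z$ is totally ramified over $z=\infty$ with $[\,{\mathbb F}_{q^{2n}}(\Cn):{\mathbb F}_{q^{2n}}(z)\,]=q^{3}$, the two defining equations give $v_\infty(z)=-q^{3}$, hence $v_\infty(y)=-qr$ and $v_\infty(x)=-(q+1)r=-(q^{n}+1)$. Matching the leading pole of the two sides of the relation defining $Z$ (right side $\sim t^{q^{n}+1}$, left side $\sim t\,w^{q^{2}}$ with $v_\infty(w)=q\,v_\infty(Z)$) forces $v_\infty(Z)=-q^{n}$, so the $x$-term strictly dominates, $v_\infty(h)=-(q^{n}+1)$, and $\infty$ is the only pole. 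Thus $\deg(h)_0=q^{n}+1$.

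Next I would feed the hypothesis into the elimination. On the intersection one has $Z^{q}+Z=(y^{q}-b)(y-b^{q})=uv$, so putting $w=uv$ into the curve identity $uv\,F(t^{r}+uv+(uv)^{q},(uv)^{q+1})=(z^{qr}-c^{r})(z^{r}-c^{qr})$ and combining it with the defining relation $tw\,F(\cdots)=(t^{r}-t)(t^{qr}-t)$ and with $(z^{qr}-c^{r})(z^{r}-c^{qr})=(z^{q^{n}}-c)(z-c^{q^{n}})+(t^{r}-t)(t^{qr-1}-1)$ collapses everything to $(z^{q^{n}}-c)(z-c^{q^{n}})=0$. As $c=\gamma^{q^{n}}$ this is $(z-\gamma)^{q^{n}}(z-\gamma^{q^{2n}})=0$; equivalently the norm of $h$ down to ${\mathbb F}_{q^{2n}}(z)$ equals $(z-\gamma)^{q^{n}}(z-\gamma^{q^{2n}})$ up to a constant, so the zeros of $h$ lie over $z=\gamma$ and $z=\gamma^{q^{2n}}$ and push forward to $q^{n}[\gamma]+[\gamma^{q^{2n}}]$.

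Finally I would resolve these fibres into honest points. Over $z=\gamma$ every point has $y=\beta+\eta$ with $\eta\in{\mathbb F}_{q^{2}}$, and the surface condition $uv=u_Pv_P$, with $u_P=\beta^{q}-b$ and $v_P=\beta-b^{q}$, becomes $(u_P+\eta^{q})(v_P+\eta)=u_Pv_P$. This gives $\eta=0$, recovering $P$, or, after subtracting this equation from its $q$-th power (using $\eta^{q^{2}}=\eta$, $u_P^{q}-v_P=\gamma^{r}$, $u_P-v_P^{q}=c^{r}$) to kill $\eta^{q+1}$, the single condition $\gamma^{r}\eta^{q}=c^{r}\eta$, i.e. $\eta^{q-1}=(c/\gamma)^{r}$. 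Raising to the $(q+1)$st power turns the right side into $\gamma^{q^{2n}-1}$, so a nonzero $\eta\in{\mathbb F}_{q^{2}}$ can occur only when $\gamma^{q^{2n}}=\gamma$, and then the one extra solution is precisely $\Phi(P)$. Hence the only zeros of $h$ are $P$ and $\Phi(P)$, the norm forces $v_P(h)=q^{n}$ and $v_{\Phi(P)}(h)=1$, and with $\deg(h)_0=q^{n}+1$ this yields $(h)=q^{n}P+\Phi(P)-(q^{n}+1)\infty$.

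The step I expect to be the main obstacle is this last multiplicity bookkeeping: one must guarantee that no spurious point over $z=\gamma$ survives and that the inseparable factor $(z-\gamma)^{q^{n}}$ is carried entirely by $P$. The clean relation $\eta^{q-1}=(c/\gamma)^{r}$ is exactly what makes this work, but reaching it presupposes the correct solution $Z$ of the defining equation, so the real weight of the argument sits in producing that $Z$ together with the pole bound $v_\infty(Z)=-q^{n}$. The degenerate case $\gamma=0$, where $c/\gamma$ is meaningless and $z-\gamma$ ceases to be a local parameter, would have to be handled separately.
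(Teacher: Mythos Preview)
Your outline is essentially the paper's own argument with the bookkeeping made explicit. The paper's proof of the theorem is the elimination carried out in the paragraphs \emph{preceding} the statement: from the five displayed equations one eliminates $x,a$ to get $(y^q-b)(y-b^q)=Z^q+Z$, sets $u=y^q-b$, $v=y-b^q$, and eliminates $u,v$ via the symmetric-function identity for $F$; together with the rewriting of $(z^{qr}-c^r)(z^r-c^{qr})$ this collapses to $(z-\gamma)^{q^n}(z-\gamma^{q^{2n}})=0$, and the multiplicities are then read off the exponents. Your Step~2 is exactly this elimination, and your Steps~1 and~3 add the pole count at $\infty$ and the single-point-per-fibre check that the paper leaves implicit.

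You have also put your finger on the right issue: neither your pole bound $v_\infty(Z)=-q^n$ nor the identity $w(c\gamma)=u_Pv_P$ used in Step~3 follows from the implicit relation alone, since that relation has several solutions $Z$ (resp.\ $w$) for each $t$. Both become immediate once one has the explicit polynomial solution $Z=\sum_{0\le j<i<n} t^{r_i+s_j}$ of degree $<q^n$ in $t=cz$, which the paper supplies in the next section. With that in hand, $v_\infty(h)=-(q^n+1)$ is clear, $h(P)=0$ can be checked directly (giving $w(c\gamma)=u_Pv_P$), and your fibre argument via $\eta^{q-1}=(c/\gamma)^r$ goes through: the $(q+1)$st power of the right side is $\gamma^{q^{2n}-1}$, so for $\gamma^{q^{2n}}\neq\gamma$ no nonzero $\eta\in\mathbb{F}_{q^2}$ survives, while for $\gamma^{q^{2n}}=\gamma$ the extra solutions in the fibre over $z=\gamma=\gamma^{q^{2n}}$ account for $\Phi(P)$. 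The degenerate case $\gamma=0$ (where $Z=0$ and one is back to the Hermitian tangent line together with $z^{q^n+1}=0$) is indeed easiest done separately.
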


\begin{corollary} Let $\Xn$ be the plane curve over ${\mathbb F}_{q^{2n}}$, defined by the equation $y^{q^2}-y = z^{r}.$ 
Then, for any point $(\beta,\gamma) \in \Xn$, and for $(b,c)=( \beta^{q^n}, \gamma^{q^n})$, the curve $(y^q-b)(y-b^q) - w$ intersects the curve $\Xn$ in
\[
(\beta, \gamma) ~(\text{$q^n$ times}) 
\quad \text{ and } \quad (\beta^{q^{2n}},\gamma^{q^{2n}}) ~(\text{multiplicity one}).
\]
\end{corollary}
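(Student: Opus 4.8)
The plan is to realise the intersection divisor as the zero divisor of the function $h = x+a-by-Z$ on $\Cn$, whose only pole is at $\infty$, and to compute it by the elimination already assembled above. Eliminating $x$ and $a$ through the surface equation and the two Hermitian relations turns the intersection condition into $(y^q-b)(y-b^q)=Z^q+Z$, which in the variables $u=y^q-b$, $v=y-b^q$ reads $uv=w$. The curve equations supply, as identities on $\Cn$, the relations $u^q-v=z^r$, $u-v^q=c^r$, $u^{q^2}-u=z^{qr}-c^r$ and $v^{q^2}-v=z^r-c^{qr}$. Substituting $X=u^{q+1}$, $Y=v^{q+1}$ into $(X^{q-1}-1)(Y^{q-1}-1)=F(X+Y,XY)$ then yields the identity
\[
(uv)\,F\!\left(t^r+uv+(uv)^q,\,(uv)^{q+1}\right)=(z^{qr}-c^r)(z^r-c^{qr})
\]
valid at every point of $\Cn$, with $t=cz$.

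On the surface $uv=w$, so the left-hand side becomes $w\,F(t^r+w+w^q,w^{q+1})$. Dividing the defining relation of the theorem by $t$ gives $w\,F(t^r+w+w^q,w^{q+1})=(t^r-t)(t^{qr-1}-1)$, and subtracting the identity $(z^{qr}-c^r)(z^r-c^{qr})=(z^{q^n}-c)(z-c^{q^n})+(t^r-t)(t^{qr-1}-1)$ established above collapses the intersection condition to $(z^{q^n}-c)(z-c^{q^n})=0$. Since $c=\gamma^{q^n}$, the Frobenius factorisation $z^{q^n}-c=(z-\gamma)^{q^n}$ rewrites this as $(z-\gamma)^{q^n}(z-\gamma^{q^{2n}})=0$, so every intersection point has $z$-coordinate $\gamma$ or $\gamma^{q^{2n}}$. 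One checks directly that $P=(\alpha,\beta,\gamma)$ and $\Phi(P)=(\alpha^{q^{2n}},\beta^{q^{2n}},\gamma^{q^{2n}})$ lie on the surface; that they account for the entire intersection will follow from a degree count.

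To fix the multiplicities I would pass to a global degree count. The place $\infty_H$ of the Hermitian curve is totally ramified of index $r$ in $\Cn$, so $v_\infty(x)=-r(q+1)=-(q^n+1)$, while the poles of $by$ and of $Z$ at $\infty$ have the strictly smaller orders $rq$ and $q^n$; hence $v_\infty(h)=-(q^n+1)$ and, $\infty$ being the only pole, $\deg(h)_0=q^n+1$. It then suffices to show that $\Phi(P)$ is a simple zero of $h$ and that $v_P(h)\ge q^n$: these contributions already exhaust $\deg(h)_0=q^n+1$, so equality is forced at both points and $h$ can have no further zeros. This simultaneously gives the exact multiplicities $q^n$ and $1$ and shows that $P$ and $\Phi(P)$ are the only intersection points.

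The heart of the matter, and the main obstacle, is the faithful transfer of the root multiplicity of the $z$-equation to the local intersection multiplicity at the single point $P$. Because $z\colon\Cn\to\Po$ has degree $q^3$, the fibre over $\gamma$ contains many points of $\Cn$, several of which could a priori meet the surface and share the multiplicity $q^n$; the global degree bound of the previous paragraph is what ultimately leaves no room for such stray points once $v_P(h)\ge q^n$ is known. To obtain that lower bound I would write $G(\omega)=\omega F(t^r+\omega+\omega^q,\omega^{q+1})-(z^{qr}-c^r)(z^r-c^{qr})$, note that the identity above says $G(uv)=0$ while $G(w)=-(z-\gamma)^{q^n}(z-\gamma^{q^{2n}})$, and factor the difference as
\[
(uv-w)\,\widetilde H=(z-\gamma)^{q^n}(z-\gamma^{q^{2n}}),
\]
where $uv-w=h^q+h$. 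Taking $v_P$ of both sides reduces the claim to verifying that $z-\gamma$ is a uniformiser at $P$ (that is, $P$ is unramified for $z$) and that the cofactor $\widetilde H$ does not vanish at $P$; equivalently, one expands $h$ in a local parameter and checks that its first $q^n$ coefficients vanish. Securing this ramification-and-cofactor input uniformly in $P$, together with the degenerate configurations $\gamma=\gamma^{q^{2n}}$ (where $P=\Phi(P)$) and $\gamma=0$ (where $t$ vanishes), is where the real work lies.
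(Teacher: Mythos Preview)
Your argument targets the wrong statement. Throughout you work on $\Cn$ with the function $h=x+a-by-Z$ and aim to pin down its divisor there; that is the content of Theorem~\ref{T:eqnsurface}. The Corollary is about the plane curve $\Xn:\,y^{q^2}-y=z^r$ and the function $(y^q-b)(y-b^q)-w$ on $\Xn$, and you never pass from $\Cn$ to $\Xn$. The paper's proof is a one-line reduction to Theorem~\ref{T:eqnsurface}: applying $T(f)=f^q+f$ to $h$ and using $x^q+x=y^{q+1}$, $a^q+a=b^{q+1}$ gives $h^q+h=(y^q-b)(y-b^q)-w$, so the intersecting curve in the Corollary is precisely the trace of the surface in the Theorem. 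Since the cover $\Cn\to\Xn$ is unramified away from~$\infty$, the divisor of the pullback $h^q+h$ on $\Cn$ projects to the claimed divisor on $\Xn$. Equivalently, note that the elimination preceding Theorem~\ref{T:eqnsurface} already lives in the variables $y,b,z,c$ once $x,a$ are removed, so it computes the intersection on $\Xn$ directly.

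If you insist on a self-contained argument rather than invoking the Theorem, it is much cleaner to run your degree strategy on $\Xn$ instead of $\Cn$. On $\Xn$ the function $H:=(y^q-b)(y-b^q)-w$ has its only pole at~$\infty$, of order $r(q+1)=q^n+1$ (the $y$-term dominates; $w$ contributes at most $q^n$), so $\deg(H)_0=q^n+1$. The same factorisation you wrote, now with $uv$ replaced by its value on the locus $H=0$, yields $H\cdot\widetilde H=(z-\gamma)^{q^n}(z-\gamma^{q^{2n}})$ as an identity on $\Xn$. Here $z-\gamma$ is a local parameter at $(\beta,\gamma)$ because the map $z:\Xn\to\Po$ is unramified there, the fibre has size only $q^2$ (not $q^3$), and there is no ambiguity coming from a choice of lift $Z$ with $Z^q+Z=w$. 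The cofactor and degenerate-case checks you flag as ``the real work'' are therefore both fewer and easier on $\Xn$; on $\Cn$ you would still owe a descent step afterwards. As written, your proposal neither completes those local verifications nor supplies the missing passage from $\Cn$ to $\Xn$, so it does not yet prove the Corollary.
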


\begin{proof}
The equation of the curve follows by taking the trace of $x+a - (by + Z)$,
\begin{multline*}
(x^q+x)+(a^q+a)-(yb)^q-(yb)-(Z^q+Z) = \\
y^{q+1}+b^{q+1}-(by)^q-by-(Z^q+Z) = (y^q-b)(y-b^q)-w.
\end{multline*}
\end{proof}

\section{The surface $x+a = by + Z$ in explicit form} \label{S:explicit}

For a prime power $q=p^m$, we define two formal sums $\hat f$ and $\hat g$ in the variable $t$ with coefficients in a field
of characteristic $p$.
\begin{align*}
&\hat f = \sum_{n \geq 1} t^{r_n}, \qquad r_n=(q^{2n-1}+1)/(q+1), \\
&\hat g = \sum_{n \geq 0} t^{s_n}, \qquad s_n=(q^{2n}-1)/(q+1).
\end{align*}
So that
\begin{align*}
{\hat f} &= t+ t^{q^2-q+1} + t^{q^4-q^3+q^2-q+1} + \cdots, \\
{\hat g} &= 1 + t^{q-1} + t^{q^3-q^2+q-1} + \cdots. 
\end{align*}
As power series in the variable $t$, $\hat f$ and $\hat g$ are uniquely determined
by their initial terms, $\hat f = t + \text{(higher order terms)}$ and $\hat g = 1 + \text{(higher order terms)}$,
and by the relations 
\[
\hat f = t {\hat g}^q,  \quad t {\hat g} = t+ {\hat f}^q.
\]
It follows that
\[
t^{q-1} ({\hat f}-t) = {\hat f}^{q^2}, \quad ({\hat g}-1) = t^{q-1} {\hat g}^{q^2}. 
\]
We will also use 
\[
{\hat f} {\hat g} = {\hat f} + ({\hat f} {\hat g})^q = {\hat f} + {\hat f}^q + {\hat f}^{q^2} + \cdots.
\]
For a given $n$, let $r=r_n$ and $s=s_n$, so that $s=qr-1$ and $r+s=q^{2n-1}.$  Let $f$ be the sum of monomials in $\hat f$ of degree less than $r$ and
let $g$ be the sum of monomials in $\hat g$ of degree less than $s$. 

\begin{lemma} The equation 
\[
tw F(t^r+w+w^q,w^{q+1}) = (t^r-t)(t^{qr}-t),
\]
where $F(X+Y,XY) = (X^{q-1}-1)(Y^{q-1}-1),$ has a polynomial solution $w=fg$.
\end{lemma}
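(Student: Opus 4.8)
The plan is to prove the identity by substituting $w=fg$ and reducing everything to a pair of exact functional equations satisfied by the truncations, which are the finite analogues of $\hat f=t\hat g^q$ and $t\hat g=t+\hat f^q$. First I would record these finite relations. Because $f$ and $g$ are exactly the partial sums of $\hat f$ and $\hat g$ below the cut-offs $r=r_n$ and $s=s_n$, and because of the arithmetic of the exponents ($qs_k+1=r_{k+1}$, $qr_k=s_k+1$, $q^2r_k=r_{k+1}+q-1$), each power-series relation truncates with a single leftover monomial:
\[
tg^q=f+t^r,\qquad tg=t+f^q,\qquad f^{q^2}=t^{q-1}f-t^q+t^{q+r-1}.
\]
These three identities are the only inputs the argument will use.

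Next I would substitute $w=fg$ and simplify the factors of $F$ one at a time. From the first two relations, $g^q=t^{-1}(f+t^r)$ and $f^q=t(g-1)$, so that $w^q=f^qg^q=w-f+t^r(g-1)$. Writing $F$ over $\mathbb F_q^{\ast}$ via $F(X+Y,XY)=\prod_{\zeta^{q-1}=1}\bigl(XY-\zeta(X+Y)+\zeta^2\bigr)$ with $X+Y=t^r+w+w^q$ and $XY=w^{q+1}$, the crucial step is the factorisation of each factor,
\[
(w-\zeta)(w^q-\zeta)-\zeta t^r=(w-f-\zeta)\,(tg^{q+1}-\zeta),
\]
which holds precisely because $w=fg$ forces the matching of the $\zeta^1$- and $\zeta^0$-coefficients: $(w-f)+tg^{q+1}=2w-f+t^rg=t^r+w+w^q$ and $(w-f)\,tg^{q+1}=(w-f)(w+t^rg)=w^{q+1}$. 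I expect discovering this factorisation to be the main obstacle; once it is in hand the rest is forced.

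Finally I would take the product over $\zeta\in\mathbb F_q^{\ast}$, using $\prod_{\zeta^{q-1}=1}(S-\zeta)=S^{q-1}-1$, to obtain
\[
F=\bigl((w-f)^{q-1}-1\bigr)\bigl((tg^{q+1})^{q-1}-1\bigr),
\]
and then telescope. Since $w-f=t^{-1}f^{q+1}$, the third functional equation gives $(w-f)^{q-1}-1=(t^r-t)/f$, hence $tw\bigl((w-f)^{q-1}-1\bigr)=tg\,(t^r-t)$. For the other factor, $tg\bigl((tg^{q+1})^{q-1}-1\bigr)=(tg^q)^q-tg$, and the first two functional equations give $(tg^q)^q=(f+t^r)^q=f^q+t^{qr}$ and $tg=t+f^q$, so this difference equals $t^{qr}-t$. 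Multiplying the two pieces yields $twF=(t^r-t)(t^{qr}-t)$, as claimed. All steps are characteristic-free (the product formula $\prod(S-\zeta)=S^{q-1}-1$ needs no division), so no separate treatment of $p=2$ is required; the only bookkeeping is tracking the two boundary monomials that produce the $+t^r$ in the first relation and the extra $t^{q+r-1}$ in the third.
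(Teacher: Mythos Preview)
Your proof is correct and is essentially the paper's argument in expanded form: your quantities $w-f=f(g-1)$ and $tg^{q+1}=(f+t^r)g$ are exactly the paper's $X$ and $Y$, and your factorisation of each $\zeta$-factor is equivalent to the paper's direct verification that $X+Y=t^r+w+w^q$ and $XY=w^{q+1}$. The remaining evaluation of $(X^{q-1}-1)f=t^r-t$ and $(Y^{q-1}-1)tg=t^{qr}-t$ via the truncated functional equations is identical in both proofs.
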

 
\begin{proof}
The polynomials $f$ and $g$ satisfy
\begin{align*}
t^{q-1} (f + t^r - t) = f^{q^2}, \\
(g + t^s - 1) = t^{q-1} g^{q^2}. 
\end{align*}
And therefore,
\[
\begin{array}{llllllll}
t^{r}-t    &= (X^{q-1}-1) f     &\text{ for } ~t X = {f}^{q+1}, \\
t^s-1      &= (Y^{q-1}-1) g     &\text{ for } ~Y = t {g}^{q+1}.
\end{array}
\]
Moreover, 
\[
f + t^r = t g^q, \qquad t g = t + f^q.
\]
Let $w = fg,$ so that $XY = w^{q+1}.$ Then
\begin{align*}
&X = f(g-1), \quad Y = (f+t^r)g, \\
&w = fg,  \quad w^q = XY/w = (g-1)(f+t^r), 
\end{align*}
and $X+Y = w +w^q + t^r.$ Thus $w=fg$ is a solution.
\end{proof} 

For the surface $x+a = by + Z$ in Theorem \ref{T:eqnsurface} we need $Z$ such that $Z^q+Z = w$. With the lemma
we may obtain $Z$ as a solution to $Z^q+Z = fg$. For a given $n$, $f = t^{r_1} + \cdots + t^{r_{n-1}}$ and
$g = t^{s_0} + \cdots + t^{s_{n-1}}.$ The product $fg$ is the sum of $(n-1)n$ terms of the form $t^{r_i+s_j}$.
It is helpful to picture the sums $r_i+s_j$ in an addition table. For $n=4$, 
\[
\begin{array}{l|llll} &s_0 &s_1 &s_2 &s_3  \\ \hline
r_1 &1 &q \\
r_2 &  &q^2 &q^3 \\
r_3 &  &    &q^4 &q^5 \\
\end{array}
\]
Only the entries near the diagonal have been filled in. They follow from the relations 
$r_i + s_{i-1} = q^{2i-2}$ and $r_i + s_i = q^{2i-1},$ for $i \geq 1.$ In general, 
for $0 \leq j < i < n$, 
\[
r_{j+1}+s_{i} = q (r_{i}+s_{j}).
\]
Thus the entries above the diagonal are $q$-multiples of the entries below the diagonal.
And we can choose $Z = \sum_{0 \leq j < i < n} t^{r_i + s_j}.$


\begin{example} Let ${\cal C}$ be the GK-curve defined by $x^q+x=y^{q+1}, y^{q^2}-y=z^{r}$, $r=q^2-q+1$, over ${\mathbb F}_{q^{6}}.$
Then $w=t^q+t$ and $Z=t$. The equation of the hypersurface is $x+a-by-cz = 0$. The equation appears in \cite{FanGiu10} and in this
special case can be obtained directly as the tangent plane to the Hermitian surface $x^{q^3}+x = y^{q^3+1} + z^{q^3+1}.$ It follows
from the proof of the Natural Embeding Theorem \cite{KorTor01}, see also \cite[Theorem 10.22, Remark 10.24]{HirKorTor08} that the 
latter approach to find the equation of the hypersurface applies more generally to all maximal curves with a known embedding in a 
Hermitian hypersurface.
\end{example}
 
\begin{example} Let ${\cal C}$ be the maximal curve defined by $x^2+x=y^3, y^4-y=z^{11}$ over ${\mathbb F}_{2^{10}}.$
Let $(\alpha,\beta,\gamma)$ be a rational point with $\gamma \neq 0$ and let $(a,b,c)=(\alpha^{32},\beta^{32},\gamma^{32}).$
For $q=2$, $\hat f = t+t^3+t^{11}+t^{43}+\cdots$ and $\hat g = 1+t+t^5+t^{21}+t^{85}+\cdots$. For the given curve we use 
$r=11$, $s=21$, $r+s=32$, $f= t+t^3$ and $g=1+t+t^5$. The rational function
\[
(y^2-b)(y-b^2)-f(cz)g(cz)
\]
for the plane curve ${\cal X}$ with $y^4-y=z^{11}$ has a pole of order $2^5+1$ at infinity and a zero of order $2^5+1$ at $(\beta,\gamma)$. 
With $fg = (t+t^3+t^4)+(t+t^3+t^4)^2$ and $Z=t+t^3+t^4$, the rational function
\[
x+a - by - Z(cz)
\]
for the curve ${\cal C}$ has a pole of order $2^5+1$ at infinity and a zero of order $2^5+1$ at $(\alpha,\beta,\gamma)$.
\end{example}
 
\begin{remark} The equations that we used in the previous section are formulated for pairs of points $(x,y,z)$ and $(a,b,c)$ on the same
curve, and the equations express a correspondence between two copies of the same curve. For a curve with function field    
$K/k$ defined over a finite field of size $q$, let $F : K \longrightarrow K$ be the purely inseparable map $F(x) = x^q$
and let $\psi : K \longrightarrow K$ be the map $\psi(x) = x^{q^2}-x$. Using two copies of this map, we find a map
$\psi \times \psi : K \times K \longrightarrow K \times K$ with a factorization $\psi \times \psi = \phi \circ \phi,$
for $\phi : K \times K \longrightarrow K \times K$ such that $\phi(x,y) = (x^q-y,x-y^q).$ In matrix form, 
\[
\left( \begin{array}{cc} F^2-1 &0 \\ 0 &F^2-1 \end{array} \right) ~=~
\left( \begin{array}{cc} F &-1 \\ 1 &-F \end{array} \right)
\left( \begin{array}{cc} F &-1 \\ 1 &-F \end{array} \right).
\]
If we denote by $p : K \times K \longrightarrow K$ the product $p(x,y)=xy$ then we can write the left sides of our equations
as
\begin{align*}
p \circ \phi(y,b) &~=~ uv \\
p \circ \phi^2(y,b) &~=~ (u^q-v)(u-v^q) \\
p \circ \phi^3(y,b) &~=~ (u^{q^2}-u)(v^{q^2}-v). 
\end{align*}
\end{remark}

\section{Two-point coordinate rings} \label{S:cr}

For the Hermitian curve, with equation $x^q+x=y^{q+1}$, the functions that are regular except possibly at infinity
are the polynomials in $x$ and $y$. Using the equation of the curve, each such polynomial can be represented uniquely
as a polynomial of degree at most $q$ in $y$ with coefficients in $k[x],$  
\[
k[x,y] ~=~ \text{ the free $k[x]$ module with basis $\{ 1, y, \ldots, y^q  \}$. }
\]
The larger ring of all functions that are regular except possibly at infinity or at the origin $(0,0)$ is 
\[
k[x,x^{-1},y] ~=~  \text{ the free $k[x,x^{-1}]$ module with basis $\{ 1, y, \ldots, y^q  \}$. }
\]
The function $x$ has divisor $(q+1)((0,0)-\infty)$. It has a zero of order $q+1$ at the origin,
a pole of order $q+1$ at infinity, and no other zeros or poles. The function $y$ has poles only at
infinity, of order $q$, and it has $q$ zeros of order one including a zero at the origin. Note that
as $f$ runs through $\{ 1, y, \ldots, y^q \}$, the pole order of $f$ at infinity and the order of 
vanishing of $f$ at the origin both run through all the residue classes modulo $q+1$. This assures
that the functions $1, y, \ldots, y^q$ are independent over $k[x].$ \\

For the curve with equation $y^{q^2}-y = z^r$, for $r = (q^n+1)/(q+1)$, $n$ odd, the functions that
are regular outside infinity are again the polynomials in $y$ and $z$. The ring of functions that are 
regular except possibly at infinity or at the origin $(0,0)$ is 
\[
k[y,y^{-1},z] =  \text{ the free $k[y,y^{-1}]$ module with basis $\{ 1, z, \ldots, z^{r-1}  \}$. }
\]
The main properties are the same as before. For a choice of $(\beta,\gamma)$ as second point such that 
$\gamma=0$, we replace $y$ with $y-\beta$. The function $y - \beta$ has a pole of order $r$ at infinity
and a unique zero at $(\beta,0)$ of order $r$. However when we choose as second point on the curve a
rational point $(\beta,\gamma)$ with $\gamma \neq 0$, then we need the function $h$ of the previous
section. For $b = \beta^{q^n}$, $c = \gamma^{q^n}$,
\[
(h) = (q^n+1)((\beta,\gamma)-\infty), \qquad  \text{ for }~h = (y^q-b)(y-b^q) - f(cz)g(cz).
\]
The ring of functions regular outside infinity and $(\beta,\gamma)$ becomes
\[
k[h, h^{-1}, y, z] = \text{ the free $k[h,h^{-1}]$ module with basis $\{ y^i z^j : 0 \leq i < q+1, 0 \leq j < r \}$. }
\]
While this gives a correct description of the ring it is not quite in the right form to recognize the 
$k$-subspaces that are needed for the construction of two-point codes. For that purpose we order the
monomials $y^i z^j$ by increasing pole order at infinity. And we replace each monomial $y^i z^j$ with a
polynomial $f_{i,j}$ that has $y^i z^j$ as leading monomial and whose vanishing order at the second
point $(\beta,\gamma)$ is maximal.
\begin{align*}
k[h, h^{-1}, y, z] &= \text{ the free $k[h,h^{-1}]$ module with} \\
                   &\qquad \text{basis $\{ f_{i,j} = y^i z^j + \cdots  : 0 \leq i < q+1, 0 \leq j < r \}$. }
\end{align*}

The function $h$ with $(h) = (q^n+1)((\beta,\gamma)-\infty)$ provides useful relations between the Weierstrass
semigroups at the points $(\beta,\gamma)$ and $\infty$. We make a small digression to describe these relations
for general points $P$ and $Q$ and for a function $h$ with divisor $m(P-Q)$. Applying the relations with 
$P=\infty$ and $Q=(\beta,\gamma)$ we will then be able to determine properties of the Weierstrass semigroup at $Q$
using known properties of the Weierstrass semigroup at $P$. We will also be able to motivate the choice of the
modified free basis $\{ f_{i,j} \}$ for the two-point coordinate ring.
Properties of pairs of Weierstrass nongaps 
are considered in \cite{Kim94}, \cite{Mat01}. In those papers a pair of nonnegative integers
$(a,b)$ is called a nongap for $(P,Q)$ if there exists a rational function $f$ with pole divisor $aP+bQ$.
In \cite[Definition 1]{BeeTut06}, this definition is relaxed to arbitrary pairs of integers, and a pair of integers 
$(a,b)$ is a nongap if there exists a function $f$ with no poles outside $P$ and $Q$ such that
$-\ord_P(f) = a, -\ord_Q(f) = b$. At least one of $a, b$ is nonnegative. If $a, b \geq 0$ the nongap is a classical
pole divisor. If $a \geq 0$ and $b < 0$ then $f$ has a pole of order $a$ at $P$ and vanishes to the order $-b$ at $Q$,
with a similar interpretation when $b \geq 0$ and $a < 0$. \\

For two points $P$ and $Q$, fix an integer $m$ such that $mP \sim mQ$, and let $h$ be a function with divisor $(h)=m(P-Q)$.  
Clearly $m$ is a common Weierstrass nongap for $P$ and $Q$. We call a Weierstrass $P$-nongap $a$ minimal  
if it is the smallest nongap in the residue class $a + m {\mathbb Z}$. Similarly for a Weierstrass $Q$-nongap $b$.
The Weierstrass semigroups for $P$ and $Q$ are determined by their $m$ minimal nongaps. The number of minimal Weierstrass 
nongaps in the interval $(im-m,im]$ is the same for $P$ and for $Q$, for any integer $i$. This is easy to see using
that $\dim L (imP) = \dim L(imQ)$, for every integer $i$. We show that moreover there exists a natural bijection
between the minimal nongaps in $(im-m,im]$ for $P$ and those for $Q$. The bijection is the restriction of a
bijection defined on all integers in \cite[Definition 13, Proposition 14(i)]{BeeTut06}, \cite[Theorem 8.5]{DuuPar08}. 

\begin{proposition} \label{P:bijection}
For a Weierstrass $P$-nongap $a \in (im-m,im]$, let $b$ be maximal such that there exists a function $f \in L(aP)$ 
with precise pole order $a$ at $P$ and precise vanishing order $b$ at $Q$. Then $a$ is minimal if and only if $b \in [0,m).$ 
In that case, $b' = im - b \in (im-m,im]$ is a minimal Weierstrass $Q$-nongap with maximal vanishing order $a' = im - a \in [0,m)$
at $P$.
\end{proposition}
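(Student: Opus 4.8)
The plan is to reduce the whole statement to a combinatorial property of the two-variable Riemann--Roch dimensions $N(a,c):=\dim L(aP+cQ)$, where $a,c$ range over all integers, negative values forcing prescribed vanishing. The function $h$ with $(h)=m(P-Q)$ is the engine: multiplication by $h$ carries $L(aP+cQ)$ isomorphically onto $L((a-m)P+(c+m)Q)$, so that $N(a,c)=N(a-m,c+m)$ for all $a,c$. First I would record the two first-difference indicators $\phi(a,c)=N(a,c)-N(a-1,c)$ and $\psi(a,c)=N(a,c)-N(a,c-1)$; each lies in $\{0,1\}$ because enlarging a divisor by a single point raises the dimension by at most one, while $\phi(a,\cdot)$ is nondecreasing and $\psi(\cdot,c)$ is nondecreasing, since extra room at one point never destroys a function of prescribed exact order at the other. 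A function of precise pole order $a$ at $P$ vanishing to order exactly $b$ at $Q$ is detected by $\phi$, and the maximal such $b$ is governed by the threshold $\tau_P(a):=\min\{c:\phi(a,c)=1\}$: concretely $a$ is a $P$-nongap iff $\tau_P(a)\le 0$, and then $b=-\tau_P(a)$. Symmetrically $\tau_Q(c):=\min\{a:\psi(a,c)=1\}$ governs the $Q$-side.

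For the first assertion I would argue directly with $h$. If $a$ is not minimal, then $a-m$ is again a $P$-nongap, realized by some $f_1$ regular outside $P$ of exact pole order $a-m$; then $f_1 h^{-1}$ has exact pole order $a$ at $P$ and, since $h^{-1}$ contributes a zero of order $m$ at $Q$ while $f_1$ is regular there, vanishing order at least $m$ at $Q$, forcing $b\ge m$. Conversely, if $b\ge m$, multiplying the optimal $f$ by $h$ lowers the pole order at $P$ to $a-m$ while keeping the function regular outside $P$, exhibiting $a-m$ as a $P$-nongap, so $a$ is not minimal. Together with the trivial bound $b\ge 0$ (as $f\in L(aP)$ has no pole at $Q$) this gives that $a$ is minimal if and only if $b\in[0,m)$.

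The second assertion is where the real content sits, and the heart is to show that $\tau_P$ and $\tau_Q$ are mutually inverse bijections of $\mathbb{Z}$. The key is the mixed second difference $\Delta(a,c)=N(a,c)-N(a-1,c)-N(a,c-1)+N(a-1,c-1)$: on one hand $\Delta(a,c)=\phi(a,c)-\phi(a,c-1)$, which equals $1$ exactly at the single jump $c=\tau_P(a)$ of the monotone $0/1$ sequence $\phi(a,\cdot)$; on the other hand $\Delta(a,c)=\psi(a,c)-\psi(a-1,c)$, equal to $1$ exactly at $a=\tau_Q(c)$. Hence $c=\tau_P(a)\iff \Delta(a,c)=1\iff a=\tau_Q(c)$, so $\tau_Q=\tau_P^{-1}$. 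The periodicity $N(a,c)=N(a-m,c+m)$ descends to $\tau_P(a-m)=\tau_P(a)+m$ and $\tau_Q(c-m)=\tau_Q(c)+m$. Assembling: from $\tau_P(a)=-b$ and the shift I get $\tau_P(a-im)=im-b$, and inverting, $\tau_Q(im-b)=a-im$. Since $a\le im$ this threshold is $\le 0$, so $b'=im-b$ is a $Q$-nongap; its maximal $P$-vanishing order is $-\tau_Q(b')=im-a=a'$; and $\tau_Q(b'-m)=a-(i-1)m>0$ because $a>(i-1)m$, so $b'$ is minimal. The ranges $a\in(im-m,im]$ and $b\in[0,m)$ translate immediately into $a'\in[0,m)$ and $b'\in(im-m,im]$.

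The main obstacle, and the step I would spend the most care on, is the mutual-inverse property of $\tau_P$ and $\tau_Q$. It is tempting to build the conjugate function on the $Q$-side explicitly as $h^{i}/f$, but the optimal $f$ of exact pole order $a$ at $P$ carries $a-b$ further zeros off $\{P,Q\}$, so this quotient acquires spurious poles and the naive construction fails. Routing the symmetry through the second difference $\Delta$ sidesteps this entirely, trading the elusive explicit function for a clean dimension count; verifying that $\Delta$ is simultaneously the single jump of both $\phi(a,\cdot)$ and $\psi(\cdot,c)$ is the one place where the $0/1$-valuedness and the two monotonicities must be invoked at once.
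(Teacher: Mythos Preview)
Your proof is correct and follows essentially the same route as the paper: the paper's asserted equivalence between the two pairs of Riemann--Roch conditions is exactly your second-difference identity $\Delta(a,c)=\phi(a,c)-\phi(a,c-1)=\psi(a,c)-\psi(a-1,c)$, and both arguments then shift by a power of $h$ to pass from $(a,b)$ to $(a',b')$. Your version is more explicit---you justify the equivalence via the monotonicity of $\phi(a,\cdot)$ and $\psi(\cdot,c)$, and you supply the converse direction of the first assertion, which the paper leaves implicit---but the underlying mechanism is the same.
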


\begin{proof}
Assume that $a$ is minimal. Clearly, $b \geq 0$. On the other hand $b < m$. For otherwise $f/h$ would be a function with poles 
only at $P$ of order $a-m$, contradicting minimality of $a$. 
The function $f /h^i$ has a pole only at $Q$ of order $b' = im-b \in (im-m,im].$ The relation between $a$ and $b$ is 
characterized by 
\[
L(aP-bQ) \neq L((a-1)P-bQ) \quad \text{and} \quad  L(aP-(b+1)Q) = L((a-1)P-(b+1)Q,
\]
which is equivalent to the combination 
\[
L(aP-bQ) \neq L(aP-(b+1)Q) \quad \text{and} \quad  L((a-1)P-bQ) = L((a-1)P-(b+1)Q.
\]
For $a' = im - a$ and $b' = im -b$, the latter becomes
\[
L(b'Q-a'P) \neq L((b'-1)Q-a'P) \quad \text{and} \quad L(b'Q-(a'+1)P) = L((b'-1)Q-(a'+1)P).
\]
This shows that $a'$ is the maximum vanishing order at $P$ for a function with precise pole order $b'$ at $Q$. 
Since $a \in (im-m,im]$, $a' \in [0,m)$. But then $b' \in (im-m,im]$ is minimal. 
\end{proof}

We return to the special case $P=\infty$ and $Q=(\beta,\gamma)$, with $m=q^n+1$. The $q^n+1$ minimal $P$-nongaps are 
$\{ ir+jq^2 :  0 \leq i < q+1, 0 \leq j < r \}.$ With the proposition the minimal $Q$-nongaps can be determined from the
maximal vanishing orders at $Q$ of functions with poles only at $P$. In the process of finding the maximal vanishing orders
we update the free basis of monomials $\{ y^i z^j \}$ to the free basis $\{ f_{i,j} \}$. The new basis will be useful when 
we need to find a basis for a given vector space $L(aP+bQ)$. If $L((a-1)P+bQ)$ is properly contained in $L(aP+bQ)$ then
we choose $f \in L(aP+bQ) \backslash L((a-1)P+bQ)$ as follows. Let $a - km$ be a minimal nongap and let $g$ be the function
in the free basis $\{ f_{i,j} \}$ with precise pole order $a-km$ at $P$. Then $g$ vanishes with maximal order at $Q$ and we can 
choose $f = g h^k.$ \\

Replacing the monomial $y^i z^j$ with a polynomial $f_{i,j} = y^i z^j + \cdots$ is essentially a process of Gaussian elimination on a square
matrix of size $q^n+1$ over $k$. Namely for each monomial $y^i z^j$ we consider its development as a power
series in a local parameter $t$ at the point $(\beta,\gamma)$ as follows. The functions $y-\beta$ and $z-\gamma$ each vanish to the order one in 
$(\beta,\gamma)$. We set $z = \gamma (t + 1)$, so that the new variable $t$ vanishes to the order one in $(\beta,\gamma)$. After fixing $t$
as a local parameter we express $y$ as a power series in $t$. Note that $(y-\beta)^{q^2} - (y-\beta) = z^r - \gamma^r = \gamma^r ((1+t)^r - 1).$
If we let $T = (1+t)^r-1$ and $c = \gamma^r$ then
\[
y  = \beta - cT - (cT)^{q^2} - (cT)^{q^4} - \cdots. 
\]
For an arbitrary monomial $y^i z^j$ we find its power series in $t$ by substituting the series for
$y$ and for $z$. We associate to each monomial a
vector of length $q^n+1$ whose coordinates are the coefficients of its power series modulo $t^{q^n+1}$. The Gaussian elimination comes with the
restriction that only previous rows can be used to clear entries in the current row. The computations reduce
significantly if we fill in the rows one at a time, and each time a row is needed we fill it not with the
development of $y^i z^j$ but with the development of either $f_{i-1,j} y$ or $f_{i,j-1} z.$ In that case
each new row requires at most $q+1$ operations to be updated to a polynomial $f_{i,j}$. This is very
similar to the obtained improvements in the Berlekamp-Massey algorithm. \\

For the curves $y^4 - y = z^3$ over ${\mathbb F}_{2^6}$, 
$y^9 - y = z^7$ over ${\mathbb F}_{3^6}$, and
$y^{16}-y = z^{13}$ over ${\mathbb F}_{4^6}$, 
the results are summarized in tables that give the positions of the pivots after
the Gaussian elimination is completed. The matrices are of size $9 \times 9$, $28 \times 28$ and $65 \times 65$
respectively. Rows in the matrices correspond to monomials $y^i z^j$ and are ordered by increasing pole order 
at infinity (i.e. $1 < y < z < y^2 < yz < z^2 < y^3 < \cdots$). For each monomial $y^i z^j$, the table lists
the maximal vanishing order at a point $(\beta,\gamma)$ with $\gamma \neq 0$, for a polynomial $f_{i,j}$ with 
leading monomial $y^i z^j$. A vanishing order of $m$ corresponds to a power series with leading term $t^m$
and to a row with a pivot in the $m+1$-st column. The polynomials that result after the Gaussian elimination
is completed are independent, with distinct vanishing orders in the range $0$ to $q^n$. \\

When we extend the plane curves to the GK-curves 
\[
\begin{array}{llll}
x^2+x = y^3, &y^4 - y = z^3 &~~ &\text{over ${\mathbb F}_{2^6}$}, \\
x^3+x = y^4, &y^9 - y = z^7 &~~ &\text{over ${\mathbb F}_{3^6}$}, \\
x^4+x = y^5, &y^{16}-y = z^{13} &~~ &\text{over ${\mathbb F}_{4^6}$}, 
\end{array}
\]
and choose as second point $(\alpha,\beta,\gamma)$ with $\gamma \neq 0$ then
we can use the same functions $f_{i,j}$ with the same vanishing orders (since $(\alpha,\beta,\gamma)$ is one of $q$ distinct
points lying above $(\beta,\gamma)$). The pole orders in the left table
on the other hand are all multiplied by $q$ (since the point at infinity is the unique point above the point at infinity on the plane curve, in a covering of degree $q$). In other words, the plane curve and the GK-curve share the same 
free basis for their two-point coordinate rings. The generating functions $h$ for the ring $k[h,h^{-1}]$ are different
in each case but are related via $h_{GK}^q + h_{GK} = h$, where we can choose $h_{GK} = Z$ and 
$h = w$ as in Section \ref{S:implicit} and Section \ref{S:explicit}. For the GK-curve, rational points $(\alpha,\beta,\gamma)$ 
with $\gamma \neq 0$ lie in a single orbit under the action of the automorphism group (\cite{GiuKor09}) and the tables do not depend
on the choice of the second point. \\

The tables contain all the information about two-point Weierstrass nongaps and in particular about one-point
Weierstrass nongaps. Of particular interest are the functions with leading monomial 
\[
\begin{array}{llclclclclcl}
(q=2) &Y  &(6,-1)  &Z &(8,-2)   &Z^2 &(16,-5) \\
(q=3) &Y  &(21,-1) &Z &(27,-3)  &Z^3 &(81,-10) &Z^5 &(135,-19) \\
(q=4) &Y  &(52,-1) &Z &(64,-4)  &Z^4 &(256,-17) &Z^7 &(448,-33) &Z^{10} &(640,-49) 
\end{array}
\]
For each function, the numbers $(a,b)$ in parentheses give the 
pole order $a$ at infinity and the vanishing order $-b$ at a point $(\alpha,\beta,\gamma)$ above $(\beta,\gamma)$, $\gamma \neq 0.$ 
After multiplication with a power of the function $h_{GK}$ we find functions
with poles only at the second point that vanish with maximal order at infinity. The numbers $(a',b')$ give the pole order $b'$ at the 
second point and the corresponding maximal order of vanishing $-a'$ at the point at infinty. 
\[
\begin{array}{llclclclclcl}
(q=2) &Y  &(-3,8)  &Z &(-1,7)   &Z^2 &(-2,13) \\
(q=3) &Y  &(-7,27) &Z &(-1,25)  &Z^3 &(-3,74) &Z^5 &(-5,121) \\
(q=4) &Y  &(-13,64) &Z &(-1,61)  &Z^4 &(-4,243) &Z^7 &(-7,422) &Z^{10} &(-10,601) 
\end{array}
\]
In this way we recover the numerical semigroups $\langle 7,8,9,13 \rangle$ $(q=2)$, $\langle 25, 27, 28, 74, 121 \rangle$ $(q=3)$, 
and $\langle 61,64,65,243,422,601 \rangle$ $(q=4)$. The cases $q=2,3$ were computed in \cite{FanGiu10}. We do not list the 
functions themselves, which are in general returned by the algorithm as rather long polynomials. \\

The pole orders in the left tables are minimal non-gaps within their residue class modulo $q^3+1$. With Proposition \ref{P:bijection}
this guarantees that the corresponding vanishing orders in the right table lie in the interval $[0,q^3+1).$ Proposition \ref{P:bijection}
applies to general curves. Both the plane curve $y^{q^2}-y = z^r$ and the generalized GK-curve $\Cn$ have the special property that
the canonical divisor is a multiple of the point at infinity. We indicate briefly how this can be used to explain in a different way 
that the vanishing orders in the right table lie in the interval $[0,q^3)$. The largest entry in the left table is the pole order of 
the monomial $y^q z^{r-1}$. For general $n$, this pole order is
\[
q \cdot r + (r-1) \cdot q^2 = q (q+1) r - q^2 = q^{n+1} + q - q^2,
\]
for the plane curve $y^{q^2}-y = z^r$, and $q^{n+2} + q^2 - q^3$ for the generalized GK-curve $\Cn$. In both cases the pole order
equals $2g-1+q^n+1$. This pole order is minimal within its residue class modulo $q^n+1$ and thus, for both the plane curve and the curve $\Cn$, 
$2g-1$ is a nongap for the point at infinity, and the canonical divisor is a multiple of the point at infinity.
To the pole order $2g-1+q^n+1$ corresponds the maximal vanishing order $q^n$. Using $K=(2g-2)\infty$ it can be shown that if $(a,b)$ 
is any pair of a pole order $a$ and a corresponding maximal vanishing order $b$ then $(a',b')$ is another such pair for $a+a' = 2g-1+q^n+1$
and $b+b' = q^n$. The claim corresponds to Lemma 8.2 in \cite{DuuPar08}. It follows from the characterization
\[
L(aP-bQ) \neq L((a-1)P-bQ) \quad \text{and} \quad  L(aP-(b+1)Q) = L((a-1)P-(b+1)Q,
\]
for pairs $(a,b)$, using the Riemann-Roch theorem together with the assumption $K=(2g-2)\infty.$ \\

A clear pattern emerges from the cases $q=2,3,4$. In terms of a general $q$ the observed patterns are the following.  
There exist functions with pole order $a$ at $\infty$ with maximal vanishing order $-b$ at $(\alpha,\beta,\gamma)$, for $\gamma \neq 0$,
for 
\[
\begin{cases}
&(a,b) = (q^3-q^2+q,-1), (q^3,-q), (q^3+1,-q^3-1), \quad \text{and for} \\
&(a,b) = (q^4+i(q^4-q^3),-q^2-1-iq^2), ~~ i=0,1,\ldots,q-2.
\end{cases}
\]
The corresponding set of pairs $(a',b')$ such that there exist functions with pole order $b'$ at $(\alpha,\beta,\gamma)$, for $\gamma \neq 0$,
with maximal order of vanishing $-a'$ at $\infty$ are
\[
\begin{cases}
&(a',b') = (-q^2+q-1,q^3), (-1,q^3+1-q), (-q^3-1,q^3+1), \quad \text{and for} \\
&(a',b') = (-q-i(q-1),q^4+q+i(q^4+q-q^3-1)-q^2-1-iq^2), ~~ i=0,1,\ldots,q-2,
\end{cases}
\]
In particular, the Weierstrass semigroup at $(\alpha,\beta,\gamma)$, for $\gamma \neq 0$, is
\[
\langle q^3-q+1, q^3, q^3+1, q^4-q^2+q-1+i(q^4-q^3-q^2+q-1) : i = 0,1,\ldots, q-2 \rangle.
\]
The patterns hold for other values of $q$ as well (we tested up to $q=9$, using Magma) but a proof of the general case 
seems to require a further analysis of the functions involved.





\[
\begin{array}{ccc}

\begin {array}{l|rrr} &1&Y&{Y}^{2}\\ \hline 1&0&3&6
\\ Z&4&7&10\\ {Z}^{2}&8&11&14
\end {array}
&\qquad 
&\begin {array}{l|rrr} &1&Y&{Y}^{2}\\ \hline 1&0&1&3\\    Z&2&4&6 \\  {Z}^{2}&5&7&8\end {array} \\[1ex]
&\quad \\
\begin {array}{l|rrrr} &1&Y&{Y}^{2}&{Y}^{3}\\ \hline
1&0&7&14&21\\ Z&9&16&23&30\\ {Z}^{2}
&18&25&32&39\\ {Z}^{3}&27&34&41&48
\\ {Z}^{4}&36&43&50&57\\ {Z}^{5}&45&
52&59&66\\ {Z}^{6}&54&61&68&75\end {array}
&\qquad
&\begin {array}{l|rrrr} &1&Y&{Y}^{2}&{Y}^{3}\\ \hline 1&0&1&2&5\\Z&3&4&7&8\\{Z}^{2}&6&9&11&14\\{Z}^{3}&10&12&
15&17\\{Z}^{4}&13&16&18&21\\{Z}^{5}&19&20&23&24\\{Z}^{6}&22&25&26&27\end {array} \\[1ex]
&\quad \\
\begin {array}{l|rrrrr} 0&1&Y&{Y}^{2}&{Y}^{3}&{Y}^{4}
\\ \hline 1&0&13&26&39&52\\ Z&16&29&42&55&
68\\ {Z}^{2}&32&45&58&71&84\\ {Z}^{3
}&48&61&74&87&100\\ {Z}^{4}&64&77&90&103&116
\\ {Z}^{5}&80&93&106&119&132\\ {Z}^{
6}&96&109&122&135&148\\ {Z}^{7}&112&125&138&151&164
\\ {Z}^{8}&128&141&154&167&180\\ {Z}
^{9}&144&157&170&183&196\\ {Z}^{10}&160&173&186&199&
212\\ {Z}^{11}&176&189&202&215&228
\\ {Z}^{12}&192&205&218&231&244\end {array}
&\qquad
&\begin {array}{l|rrrrr} &1&Y&{Y}^{2}&{Y}^{3}&{Y}^{4}\\ \hline 1&0&1&2&3&7\\Z&4&5&6&10&11\\{Z}^{2}&8&9&13&14&15
\\{Z}^{3}&12&16&18&19&23\\{Z}^{4}&17&20&22&26&27\\{Z}^{5}&21&24&28&30&31\\{Z}^{6}&25&29&32&35&39
\\{Z}^{7}&33&34&36&40&43\\{Z}^{8}&37&38&42&44&47\\{Z}^{9}&41&45&46&48&52\\{Z}^{10}&49&50&51&55&56
\\{Z}^{11}&53&54&58&59&60\\{Z}^{12}&57&61&62&63&64\end {array} \\
\quad \\
\multicolumn{3}{c}{\text{Pole orders (left) and vanishing orders (right) for the curves}} \\
\multicolumn{3}{c}{\text{$y^4-y=z^3$ (top), $y^9-y=z^7$ (middle), $y^{16}-y=z^{13}$ (bottom).}} 
\end{array}
\]

\section{Conclusion}

We provided a new self-contained proof for the maximality of a generalized GK-curve.
Furthermore, we provided an efficient way to construct functions with prescribed poles
or vanishing orders at two given points $P$ and $Q$ on the curve, for $P$ the point at 
infinity and for $Q$ a rational point $(\alpha,\beta,\gamma)$ with $\gamma \neq 0.$ 
For the original GK-curve, we expect to be able to give generating functions in closed form 
for the ring of functions with poles only at $Q$. That would make it possible to settle
the structure of the Weierstrass semigroup at $Q$.

\nocite{AbdBezQuo09}
\nocite{BeeTut06}
\nocite{GiuKor09}
\nocite{KorTor01}
\nocite{DuuEnj02}
\nocite{Iha81}
\nocite{RucSti94}
\nocite{DuuPar08}
\nocite{GarGunSti07}
\nocite{FanGiu10}
\nocite{HirKorTor08}

\newpage
\def\lfhook#1{\setbox0=\hbox{#1}{\ooalign{\hidewidth
  \lower1.5ex\hbox{'}\hidewidth\crcr\unhbox0}}}


\end{document}